\documentclass{gtmon_a}
\pdfoutput=1
\usepackage{pinlabel}


\proceedingstitle{Heegaard splittings of 3--manifolds (Haifa 2005)}
\conferencestart{10 July 2005}
\conferenceend{19 July 2005}
\conferencename{Heegaard splittings of 3--manifolds}
\conferencelocation{Haifa}

\editor{Cameron Gordon}
\givenname{Cameron}
\surname{Gordon}

\editor{Yoav}
\givenname{Yoav}
\surname{Moriah}

\title{On the degeneration ratio of tunnel numbers and free tangle
decompositions of knots}

\author{Kanji Morimoto}
\givenname{Kanji}
\surname{Morimoto}
\address{Department of IS and Mathematics\\Konan University\\
Higashi-Nada\\\newline
Okamoto 8-9-1\\Kobe 658-8501\\Japan}
\email{morimoto@konan-u.ac.jp}
\urladdr{}


\volumenumber{12}
\issuenumber{}
\publicationyear{2007}
\papernumber{09}
\startpage{265}
\endpage{275}

\doi{}
\MR{}
\Zbl{}

\arxivreference{} 

\keyword{knots}
\keyword{tunnel numbers}
\keyword{$n/k$--free tangles}
\subject{primary}{msc2000}{57M25}
\subject{primary}{msc2000}{57N10}

\received{5 December 2005}
\revised{14 July 2006}
\accepted{16 July 2006}
\published{3 December 2007}
\publishedonline{3 December 2007}
\proposed{}
\seconded{}
\corresponding{}
\version{}


\makeatletter
\def\cnewtheorem#1[#2]#3{\newtheorem{#1}{#3}[section]
\expandafter\let\csname c@#1\endcsname\c@thm}


\let\xysavmatrix\xymatrix
\def\xymatrix{\disablesubscriptcorrection\xysavmatrix}
\AtBeginDocument{}

\theoremstyle{plain}
\newtheorem{thm}{Theorem}[section]
\cnewtheorem{prop}[thm]{Proposition}
\cnewtheorem{cor}[thm]{Corollary}
\theoremstyle{definition}
\cnewtheorem{defn}[thm]{Definition}
\cnewtheorem{prob}[thm]{Problem}
\newtheorem{remark}{Remark}

\makeatother  

\makeop{cl}



\begin{document}

\begin{asciiabstract}
In this paper, we introduce a notion called n/k-free
tangle and study the degeneration ratio of tunnel numbers of knots.
\end{asciiabstract}

\begin{htmlabstract}
In this paper, we introduce a notion called n/k&ndash;free
tangle and study the degeneration ratio of tunnel numbers of knots.
\end{htmlabstract}

\begin{abstract}
In this paper, we introduce a notion called $n/k$--free
tangle and study the degeneration ratio of tunnel numbers of knots.
\end{abstract}

\maketitle

\section{Introduction}

Let $K$ be a knot in the 3--sphere $S^3$, $t(K)$ the tunnel number of $K$
and $K_1 \# K_2$ the connected sum of two knots $K_1$ and $K_2$, where
$t(K)$ is the minimal genus $-1$ among all Heegaard splittings which
contain $K$ as a core of a handle. Concerning the relationship between
$t(K_1) + t(K_2)$ and $t(K_1 \# K_2)$, we showed in Morimoto \cite{3} that there
are infinitely many tunnel number two knots $K$ such that $t(K \# K')$ is
two again for any 2--bridge knots $K'$. These are the first examples whose
tunnel numbers go down under connected sum, ie, ``2+1 = 2".
Subsequently, Kobayashi showed in Kobayashi \cite{1}, by taking connected sum of
those knots, that there are infinitely many pairs of knots $(K_1, K_2)$
such that $t(K_1 \# K_2) < t(K_1) + t(K_2) - n$ for any integer $n > 0$.
This shows that tunnel numbers of knots have arbitrarily high degeneration.

Contrary to these phenomena, Scharlemann and Schultens introduced in \cite{6} a
notion called {\em degeneration ratio\/} which is a ratio of $t(K_1 \# K_2)$
and $t(K_1) + t(K_2)$, and showed in \cite{6} that $\displaystyle {t(K_1 \# K_2)
\over {t(K_1) + t(K_2)}} \ge {2 \over 5}$ for any prime knots $K_1$ and
$K_2$. We note that Scharlemann and Schultens's original degeneration ratio
is $\displaystyle 1 - {t(K_1 \# K_2) \over {t(K_1) + t(K_2)}}$, but we use
the above one for convenience.

The degeneration ratio of our first example in Morimoto \cite{3} is
$\displaystyle {2 \over 3}$ because $t(K_1) = 2$, $t(K_2) = 1$ and $t(K_1
\# K_2) = 2$. In fact, this is the smallest example among all we know so
far. In this article, we introduce a notion called $n/k$--free tangle and
study the existence of a pair $(K_1, K_2)$ such that $\displaystyle {t(K_1
\# K_2) \over {t(K_1) + t(K_2)}} < {2 \over 3}$.

Throughout the present paper, we will work in the piecewise linear category.
For a manifold $X$ and a subcomplex $Y$ in $X$, we denote a regular
neighborhood of $Y$ in $X$ by $N(Y, X)$ or simply $N(Y)$.

\section{Free tangles}

Let $M$ be a compact 3--manifold with boundary, and $T = t_1 \cup t_2 \cup
\cdots \cup t_n$ the mutually disjoint arcs properly embedded in $M$. Then
we say that $T$ is a {\em trivial arc system\/} if there are mutually
disjoint disks $\Delta_1, \Delta_2, \ldots , \Delta_n$ in $M$ such that
$\partial \Delta_i = t_i \cup t_i'$ $(i=1,2, \ldots ,n)$, where $t_i'$ is
an arc in $\partial M$.

Let $M = B$ be a 3--ball, then the pair $(B, T)$ is called an $n$--{\em
string tangle\/}. We say that $(B, T)$ is {\em trivial\/} if $T$ is a trivial
arc system in $B$. We say that $(B, T)$ is {\em essential\/} if 
$\cl(\partial B - N(T))$ is incompressible in 
$\cl(B - N(T))$ in the case when $n > 1$ or
$(B, T)$ is not trivial in the case when $n = 1$, where $N(T)$ is a regular
neighborhood of $T$ in $B$. We also say that $(B, T)$ is {\em free\/} if
$\cl(B - N(T))$ is a handlebody.

\begin{defn}[C--trivialization arc system] \label{defn:2.1} Let $(B, T)$ be an
$n$--string tangle, and let $T'$ be a subfamily of $T$. Then we say that
$T'$ is a {\em C--trivialization arc system\/} if $T - T'$ is a trivial arc
system in the 3--manifold $\cl(B - N(T'))$.
\end{defn}

\begin{defn}[$n/k$--free tangle] \label{defn:2.2} 
Suppose $(B, T)$ is an
$n$--string free tangle, and let $k$ be an integer with $0 \le k \le n$.
Then we say that $(B, T)$ is a $n/k$--{\em free tangle\/} if the following
conditions hold:
\begin{enumerate}
\item there is a subfamily $T' \subset T$ with $\#(T') = k$ such that $T'$ is
a C--trivialization arc system,
\item $T''$ is not a C--trivialization arc system for any subfamily $T''
\subset T$ with $\#(T'') < k$.
\end{enumerate}
\end{defn}

\begin{remark} (1) $n/0$--free tangle is a trivial tangle. (2) We say that
$n/n$--free tangle is a {\em full\/} free tangle. Examples of a $2/0$--free
tangle, a $2/1$--free tangle and a $2/2$--free tangle are illustrated in
\fullref{fig:1}. (3) If $T'$ is a C--trivialization arc system in an $n$--string
free tangle $(B, T)$, then $\cl(B - N(T'))$ is a handlebody. Because $T -
T'$ is a trivial arc system in $\cl(B - N(T'))$ and $\cl(B - N(T') - N(T -
T')) = \cl(B - N(T))$ is a handlebody.
\end{remark}

\begin{figure}[ht!]
\begin{center}
\labellist
\small
\pinlabel {C--trivialization arc} at 160 190
\pinlabel {2/0} at 55 40 
\pinlabel {(i)} at 55 10
\pinlabel {2/1} at 210 40
\pinlabel {(ii)} at 210 10 
\pinlabel {2/2 (full)} at 360 40
\pinlabel {(iii)} at 360 10
\endlabellist
\includegraphics[width=10cm]{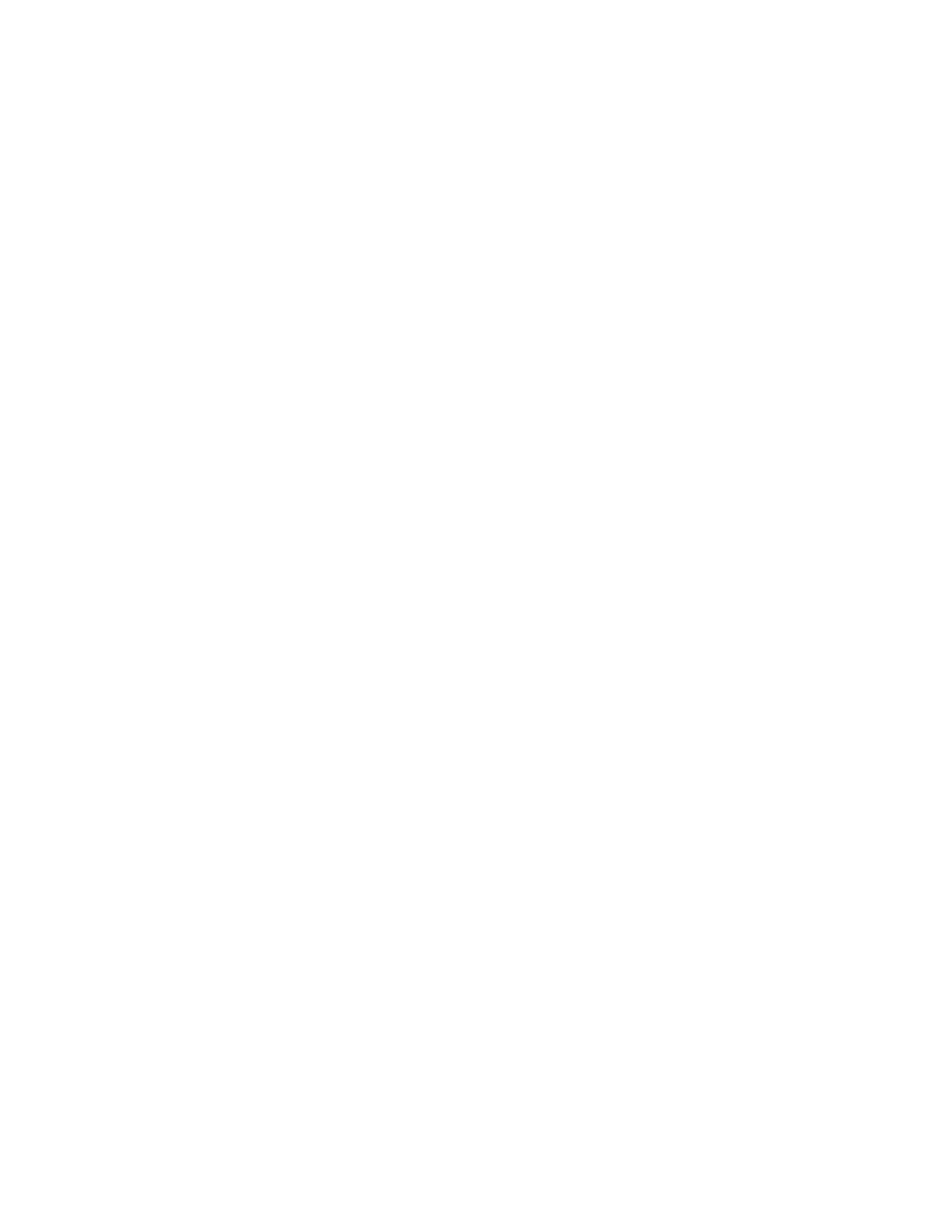}
\caption{}
\label{fig:1}
\end{center}
\end{figure}

We say that a knot $K$ has an $n$--{\em string free tangle decomposition\/} if
$(S^3, K)$ is decomposed into two $n$--string free tangles $(B_1, T_1) \cup
(B_2, T_2)$.

\begin{prop} \label{prop:2.3} Let $K$ be a knot in $S^3$ which has an
$n$--string free tangle decomposition $(S^3, K) = (B_1, T_1) \cup (B_2,
T_2)$. Suppose at least one of $(B_1, T_1)$ and $(B_2, T_2)$ is an
$n/k$--free tangle for some $k$ with $0 \le k \le n$, then 
$t(K) \le n + k - 1$. 
\end{prop}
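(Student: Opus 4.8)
The plan is to produce a genus $n+k$ Heegaard splitting $S^3 = W_1 \cup W_2$ in which $K$ is a core of a handle of $W_1$, equivalently to exhibit $n+k-1$ mutually disjoint tunnels $\Gamma$ with endpoints on $K$ such that $\cl(S^3 - N(K \cup \Gamma))$ is a handlebody. Since the tunnel number of $K$ is at most the genus minus one, this yields the bound. Assume without loss of generality that $(B_1, T_1)$ is the $n/k$--free tangle, and let $T_1' \subset T_1$ with $\#(T_1') = k$ be a C--trivialization arc system as in \fullref{defn:2.1}.

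There are three building blocks. First, since $(B_2, T_2)$ is free, $H_2 := \cl(B_2 - N(T_2))$ is a genus--$n$ handlebody. Second, by part (3) of the Remark following \fullref{defn:2.2}, $G_1 := \cl(B_1 - N(T_1'))$ is a genus--$k$ handlebody. Third, because $T_1 - T_1'$ is a trivial arc system in $G_1$, there are mutually disjoint disks $\Delta_i$ in $G_1$ with $\partial \Delta_i = t_i \cup t_i'$, where $t_i \in T_1 - T_1'$ and $t_i' \subset \partial G_1$. I would first use these disks to isotope the $n-k$ trivial strings off the interior of $B_1$ and onto the decomposing sphere $S = \partial B_1 = \partial B_2$, so that each becomes an arc lying on $S$ and joining two of the $2n$ points where $K$ crosses $S$.

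After this isotopy $K$ meets the interior of $B_1$ only in $T_1'$ and the interior of $B_2$ only in $T_2$, while the trivial strings lie on $S$. I would then add tunnels $\Gamma$ lying on $S$ so that the graph $\mathcal G$ on $S$ whose vertices are the $2n$ crossing points and whose edges are the arcs $t_i'$ together with the tunnels $\Gamma$ becomes a spanning tree of those $2n$ vertices. Since the $t_i'$ already contribute $n-k$ edges, exactly $(2n-1)-(n-k) = n+k-1$ tunnels are needed, and on the sphere they can always be routed so as to create no cycles. A regular neighborhood $Z = N(\mathcal G, S)$ of a tree is a disk, so the complementary region $R = \cl(S - Z)$ is a disk as well. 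Cutting along $S$ now identifies the two sides: $W_2 := \cl(S^3 - N(K \cup \Gamma))$ meets $B_1$ in $G_1$ and meets $B_2$ in $H_2$, and these are glued along the disk $R \subset S$, so that $W_2 = G_1 \natural H_2$ is a handlebody of genus $k+n$; while $W_1 := N(K \cup \Gamma)$ is a regular neighborhood of the graph $K \cup \Gamma$, a handlebody of genus $1 + (n+k-1) = n+k$ in which $K$ is a core of a handle. This is the desired splitting, whence $t(K) \le n+k-1$.

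The hard part is the isotopy of the trivial strings onto $S$. Although each such string is boundary--parallel in $G_1$, its shadow $t_i'$ a priori lies on $\partial G_1 = (S - N(T_1')) \cup \partial N(T_1')$ and may run along the annuli $\partial N(T_1')$ rather than on $S$ itself; I would have to arrange the trivializing disks, or push the strings further, so that the $n-k$ trivial strings come to lie disjointly on $S$ as honest edges of the planar graph $\mathcal G$. Crucially, they must not be pushed into the interior of $B_2$, since drilling the resulting boundary--parallel arcs out of $H_2$ would raise its genus and destroy the count. Once the trivial strings genuinely lie on $S$, the genus bookkeeping and the identification $W_2 = G_1 \natural H_2$ are routine.
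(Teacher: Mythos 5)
Your overall architecture---tunnels lying on the decomposing sphere and forming a tree, $W_1 = N(K \cup \Gamma)$ of genus $n+k$, complement analyzed by cutting along $S$---is in the right spirit, and your tunnel count $(2n-1)-(n-k) = n+k-1$ is correct. But the step you flag as ``the hard part'' is not merely hard: it is impossible in exactly the cases the proposition is designed for. Suppose some trivial string $t_i \in T_1 - T_1'$ could be isotoped, in the complement of the other strings, onto an arc $a \subset S$ missing the other $2n-2$ endpoints. Pulling back a collar disk of $a$ under that ambient isotopy, $t_i$ would be parallel into $\partial B_1$ via an embedded disk $\Delta$ disjoint from $T_1 - t_i$; the frontier of $N(\Delta \cup t_i)$ in $\cl(B_1 - N(T_1))$ would then be a compressing disk for $\cl(\partial B_1 - N(T_1))$ whose boundary separates the two punctures of $t_i$ from the remaining $2n-2$ punctures (nonempty since $n>1$), so it is essential. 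Hence $(B_1, T_1)$ would be inessential. For essential $n/k$--free tangles---for instance the $2/1$--free clasp tangle of \fullref{fig:1}(ii), where the trivial string genuinely clasps the C--trivialization string---no choice of trivializing disks and no further pushing can place the trivial strings on $S$: their shadows \emph{must} run over the annuli $\partial N(T_1')$. Your two escape routes (accept shadows on the annuli, or push across into $B_2$) each destroy the genus bookkeeping, as you yourself observe, so the construction cannot be completed along these lines.

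The paper's proof never moves $K$ at all, which is how it evades this obstruction. It chooses the $n+k-1$ tunnels as arcs $\alpha_1,\dots,\alpha_{n-1},\beta_1,\dots,\beta_k$ on $S$ forming a tree that absorbs \emph{both} endpoints of each arc of $T_1'$ but only \emph{one} endpoint of each trivial string; the other endpoint of each trivial string stays free and punctures the complementary disk $D$, producing a planar surface $D'$ with $n-k$ holes. Then $W_2$ is the union of the two free--tangle complements (each a genus $n$ handlebody, by freeness of both tangles) glued along $D'$, and the hypothesis that $T_1 - T_1'$ is a trivial arc system in the genus $k$ handlebody $\cl(B_1 - N(T_1'))$---with parallelism disks allowed to run wherever they like, including over the annuli---is precisely what certifies that this union is a handlebody, of genus $n + n - (n-k) = n+k$. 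In short, the C--trivialization hypothesis must be spent on the complement side, to show $W_2$ is a handlebody, not on the knot side to straighten $K$; that is the idea your proposal is missing.
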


\begin{proof}
We may assume that $(B_1, T_1)$ is an $n/k$--free tangle, and
put $T_1 = t_1^1 \cup t_2^1 \cup \cdots \cup t_n^1$. Then we can put $T_1'
= t_1^1 \cup \cdots \cup t_k^1$ to be a C--trivialization arc system, and
$T_1' = \emptyset$ if $k = 0$. Let $\alpha_1, \ldots, \alpha_{n-1},
\beta_1, \ldots, \beta_k$ be the arcs in $\partial B_1$ as in 
\fullref{fig:2} so
that $\alpha_i$ connects a point of $\partial t_i^1$ and a point of
$\partial t_{i+1}^1$ $(i=1,2, \ldots, n-1)$, $\beta_1$ connects the two
points of $\partial t_1^1$ and $\beta_i$ connects a point of $\partial
t_{i-1}^1$ and a point of $\partial t_i^1$ $(i=2, \ldots, k)$.

\begin{figure}[ht!]
\begin{center}
\labellist
\small
\pinlabel $\alpha_1$ at 70 240
\pinlabel $\alpha_2$ at 120 250 
\pinlabel $\alpha_{n-1}$ at 380 230 
\pinlabel $t_n^1$ at 435 105
\pinlabel $(B_1,T_1)$ at 350 30
\pinlabel $t_k^1$ at 260 25
\pinlabel $\beta_k$ at 180 15
\pinlabel $\beta_2$ at 80 35
\pinlabel $\beta_1$ at 5 95
\pinlabel $t_1^1$ at 15 190
\endlabellist
\includegraphics[width=8cm]{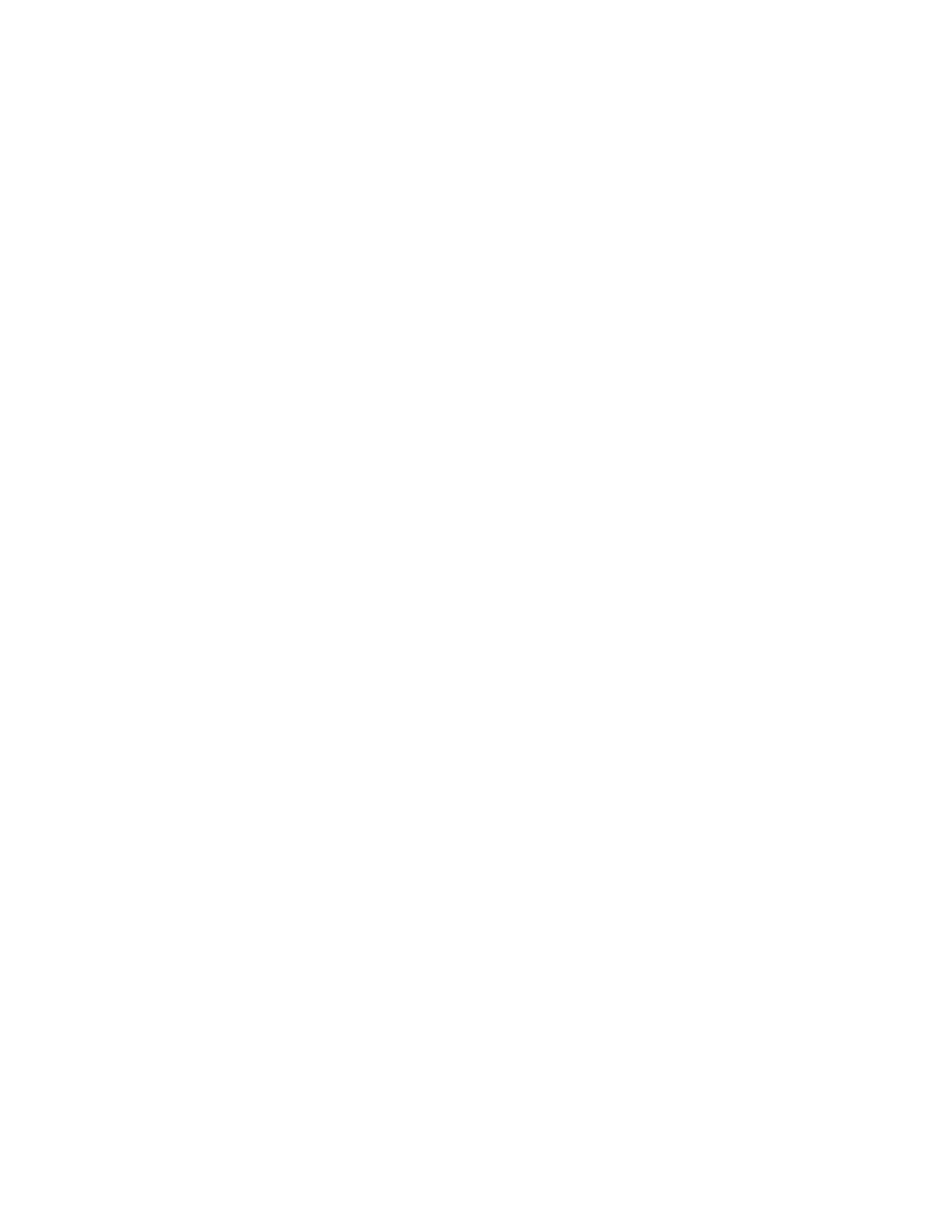}
\caption{}
\label{fig:2}
\end{center}
\end{figure}

Put $D = \cl(\partial B_1 - N(\alpha_1 \cup \cdots \cup \alpha_{n-1} \cup
\beta_1 \cup \cdots \cup \beta_k))$, where $N(\alpha_1 \cup \cdots \cup
\alpha_{n-1} \cup \beta_1 \cup \cdots \cup \beta_k)$ is a regular
neighborhood of $\alpha_1 \cup \cdots \cup \alpha_{n-1} \cup \beta_1 \cup
\cdots \cup \beta_k$ in $S^3$. Then $D$ is a disk in $\partial B_1$ and $D$
is a disk in $\partial B_2$ too. We note that $T_1 - T_1' = t_{k+1}^1 \cup
\cdots \cup t_n^1$ is a trivial arc system in the genus $k$ handlebody
$\cl(B_1 - N(T_1'))$, and one end point of $\partial t_i^1$ 
$(i=k+1, \ldots, n)$ is in $D$.

Put $W_1 = N(K) \cup N(\alpha_1 \cup \cdots \cup \alpha_{n-1} \cup \beta_1
\cup \cdots \cup \beta_k)$, then $W_1$ is a genus $n+k$ handlebody. Put
$W_2 = \cl(S^3 - W_1)$ and put $D' = \cl(D - N(t_{k+1}^1 \cup \cdots \cup
t_n^1))$. Then, by the above note, $W_2 = \cl(B_1 - N(T_1) - N(\alpha_1 \cup
\cdots \cup \alpha_{n-1} \cup \beta_1 \cup \cdots \cup \beta_k)) \cup _{D'}
\cl(B_2 - N(T_2) - N(\alpha_1 \cup \cdots \cup \alpha_{n-1} \cup \beta_1
\cup \cdots \cup \beta_k))$ is a genus $n+n-(n-k) = n+k$ handlebody. Hence
$(W_1, W_2)$ is a genus $n+k$ Heegaard splitting of $S^3$ such that $W_1$
contains $K$ as a core of a handle. This shows that $t(K) \le n + k -1$.
\end{proof}

\begin{cor} \rm{(Morimoto \cite{5})} \qua  \label{cor:2.4} 
If $K$ has an $n$--string free tangle decomposition, then 
$t(K) \le 2n - 1$. 
\end{cor}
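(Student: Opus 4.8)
The plan is to deduce the corollary directly from \fullref{prop:2.3}, by verifying that every $n$--string free tangle automatically qualifies as an $n/k$--free tangle for some $k$ with $0 \le k \le n$, and then feeding this into the proposition in the worst case $k = n$.

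The first step is to observe that in any $n$--string free tangle $(B, T)$ the full arc family $T$ is itself a C--trivialization arc system. Indeed, taking $T' = T$ in \fullref{defn:2.1} leaves $T - T' = \emptyset$, which is a trivial arc system (vacuously, with no arcs) in $\cl(B - N(T)) = \cl(B - N(T'))$; and $\cl(B - N(T))$ is a handlebody precisely because $(B, T)$ is a free tangle. Hence the set of integers $k$ admitting a C--trivialization arc system of size $k$ is nonempty, since it contains $n$.

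The second step is to let $k_0$ be the smallest size of a C--trivialization arc system in $(B_1, T_1)$; by the previous step $k_0$ is well defined and $0 \le k_0 \le n$. Minimality is exactly condition (2) of \fullref{defn:2.2}, while a C--trivialization arc system of size $k_0$ realizes condition (1), so $(B_1, T_1)$ is an $n/k_0$--free tangle. I would then apply \fullref{prop:2.3} to the given decomposition with this value of $k_0$, obtaining $t(K) \le n + k_0 - 1$, and finally invoke $k_0 \le n$ to conclude $t(K) \le n + n - 1 = 2n - 1$.

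Essentially all the content sits in the first step — the observation that the empty arc system is vacuously trivial, so that the full family is always a C--trivialization arc system and the minimizing index $k_0$ is guaranteed to exist and to be at most $n$. Once that is granted there is no genuine obstacle: the definition of $n/k_0$--free is satisfied by construction, and \fullref{prop:2.3} performs all of the topological work of building the Heegaard splitting.
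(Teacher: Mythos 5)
Your proof is correct and is essentially the deduction the paper intends: the corollary is stated as an immediate consequence of \fullref{prop:2.3}, and your argument simply makes explicit the (implicit) observation that $T' = T$ is always a C--trivialization arc system in a free tangle, so every $n$--string free tangle is $n/k_0$--free for some $k_0 \le n$. Your careful justification of the existence of the minimal $k_0$ fills in exactly the detail the paper leaves unstated.
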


By the above proposition, we can ask if the estimate in the proposition is
best possible.

\begin{prob} \label{prob:2.5} For any $n > 1$ and $k$ with $0 \le k \le n$, are
there knots $K$ satisfying the following conditions:
\begin{enumerate}
\item $K$ has an $n$--string free tangle decomposition with at least one
$n/k$--free tangle,
\item $t(K) = n + k - 1$?
\end{enumerate}
\end{prob}

In particular, we want to ask the following.

\begin{prob} \label{prob:2.6} For any $n > 1$, are there knots $K$ satisfying the
following conditions:
\begin{enumerate}
\item $K$ has an $n$--string free tangle decomposition,
\item $t(K) = 2n - 1$?
\end{enumerate}
\end{prob}

\section{Degeneration ratio}

\begin{prop} \label{prop:3.1} 
Let $K_1$ be a knot which has an $n$--string
free tangle decomposition for $n > 1$, and $K_2$ a knot which has an
$(n+1)/0$--free tangle decomposition (ie $n+1$--bridge decomposition). 
Then $t(K_1 \# K_2) \le 2n - 1$. 
\end{prop}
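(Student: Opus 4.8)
The plan is to imitate the proof of \fullref{prop:2.3}: I will exhibit a genus $2n$ Heegaard splitting $(W_1,W_2)$ of $S^3$ in which $K_1\#K_2$ appears as a core of a handle of $W_1$, from which $t(K_1\#K_2)\le 2n-1$ is immediate. Write the two given decompositions as $(S^3,K_1)=(B_1,T_1)\cup_Q(B_2,T_2)$ with each $\cl(B_i-N(T_i))$ a handlebody of genus $n$, and $(S^3,K_2)=(C_1,U_1)\cup_{Q'}(C_2,U_2)$ where, since an $(n+1)/0$--free tangle is trivial, each $U_j$ is a boundary--parallel (trivial) arc system and each $\cl(C_j-N(U_j))$ is a genus $n+1$ handlebody. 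I will then form $K_1\#K_2$ by performing the connected sum so that a single string of $T_1$ is joined to a single arc of $U_1$ inside a small ball straddling $Q$ and $Q'$, and I will tube $Q$ to $Q'$ to obtain one decomposing sphere $\hat Q$ for $K_1\#K_2$.

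Next I would take $W_1=N(K_1\#K_2)\cup N(\mathcal A)$, where $\mathcal A$ is a system of $2n-1$ arcs generalizing the $\alpha_i,\beta_j$ of \fullref{prop:2.3}: the $n-1$ arcs joining consecutive strings of $T_1$, together with $n$ further arcs that simultaneously trivialize the free tangle of $K_1$ and organize the inserted copy of $K_2$. The guiding numerology is that $K_2$, being $(n+1)$--bridge, can be drawn inside the summed string with exactly $n+1$ maxima, while $\cl(B_1-N(T_1))$ supplies $n$ handles and the connect--sum neck supplies one more; thus the $n+1$ maxima of $K_2$ can be absorbed by the $n+1$ available handles without any new handle being created. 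Counting $N(K_1\#K_2)$ as genus $1$ and adjoining the $2n-1$ arcs of $\mathcal A$, the handlebody $W_1$ has genus $2n$, with $K_1\#K_2$ a core of one of its handles.

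The main work --- and the step I expect to be the genuine obstacle --- is verifying that $W_2=\cl(S^3-W_1)$ is again a handlebody, necessarily of genus $2n$. Following the bookkeeping of \fullref{prop:2.3}, one cuts $W_2$ along the disks left by the tubed sphere $\hat Q$ into a piece lying in the $Q$--side and a piece lying in the $Q'$--side, and identifies each as a handlebody using that $\cl(B_i-N(T_i))$ is a handlebody (freeness of $K_1$'s tangles) and that $\cl(C_j-N(U_j))$ is a handlebody (triviality of $K_2$'s tangles). The delicate point is the genus accounting: because the arcs of $U_1$ are boundary--parallel, the meridian along which the free--tangle handlebody is joined to the trivial--tangle handlebody is primitive on the trivial side, so this joining --- together with the single connect--sum merge, which absorbs the ``extra'' $(n+1)$--st bridge of $K_2$ --- costs no net genus. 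Establishing rigorously that summing an $(n+1)$--bridge knot into the free $n$--tangle produces no additional handle, so that both $W_1$ and $W_2$ remain at genus $2n$ rather than the genus $2n+1$ or more that a careless amalgamation of splittings would yield, is the heart of the argument; the matching ``$n$ strings $+\,1$ neck $=n+1$ bridges'' is precisely what makes this saving possible.
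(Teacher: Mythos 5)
Your proposal has the right target --- a genus $2n$ Heegaard splitting $(W_1,W_2)$ of $S^3$ with $K_1\#K_2$ a core of a handle of $W_1$ --- and the right numerology, but it is a plan rather than a proof, and the plan points in a direction the hypotheses do not support. First, the arc system $\mathcal A$ is never constructed, and the description you give of it cannot be fulfilled: you ask for ``$n$ further arcs that simultaneously trivialize the free tangle of $K_1$'', but \fullref{prop:3.1} assumes only that $K_1$'s tangles are \emph{free}. They could be full $n/n$--free tangles, in which case, by \fullref{defn:2.2}, no family of fewer than $n$ arcs is a C--trivialization arc system, so the trivializing arcs you want do not exist; a correct proof must draw the entire saving from the bridge structure of $K_2$ and never touch the interior structure of $K_1$'s tangles. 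Second, you explicitly defer the verification that $W_2=\cl(S^3-W_1)$ is a handlebody (``the step I expect to be the genuine obstacle'', ``the heart of the argument''); the remark that a meridian ``is primitive on the trivial side'' is an assertion, not an argument, and it cannot even be checked while $W_1$ itself is unspecified. What you have written is a statement of the difficulty, not a resolution of it.

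The paper dissolves that difficulty by setting up the connected sum differently from your localized ``small ball straddling $Q$ and $Q'$'' picture. One string $t_n^2$ of $T_2$ is subdivided into three sub-arcs; then $N=\cl(N(K_1)-N_{n1}^2)$ is a ball neighborhood of all of $K_1$ but the middle sub-arc, cut into $2n+1$ string-neighborhood pieces, and $M=\cl(N(K_2)-M_{n+1}^1)$ is a ball neighborhood of all of $K_2$ but one bridge string, also cut into $2n+1$ pieces (this is exactly where the count works: $2n+1=(2n+2)-1$). The connected sum is realized by a homeomorphism $f\co N\to M$ matching these pieces one-to-one, so the bridge strings of $K_2$ are laid alongside the strings of the free tangle decomposition of $K_1$. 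With this string-to-string gluing one takes $W_1=\cl(B_1-N(T_1))\cup_g\bigl(\cl(C_1-M)\cup N_{n1}^2\bigr)$ and $W_2=\cl(B_2-N(T_2))\cup_g\cl(C_2-M)$: each is, by construction, a free-tangle complement (a handlebody, by freeness of $(B_i,T_i)$) glued via $g$ to the complement of a trivial arc system in a ball (a handlebody, by triviality of $(C_j,S_j)$), plus in $W_1$'s case the tube $N_{n1}^2$ around the summand arc; handlebody-ness and the genus counts $n+(n-1)+1=2n$ and $n+n=2n$ then follow at once from the two hypotheses, with no residual verification of the kind your plan postpones. That global matching of the two tangle structures is the missing idea in your proposal; without it there is no reason for the complement of your $N(K_1\#K_2)\cup N(\mathcal A)$ to be a handlebody, and the route through trivializing $K_1$'s tangles is closed off by the hypotheses themselves.
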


\begin{proof} Suppose $(S_1^3, K_1) = (B_1, T_1) \cup(B_2, T_2)$ is an
$n$--string free tangle decomposition and $(S_2^3, K_2) = (C_1, S_1) \cup
(C_2, S_2)$ is an $(n+1)/0$--free tangle decomposition, where $T_1 = t_1^1
\cup t_2^1 \cup \cdots \cup t_n^1$, $T_2 = t_1^2 \cup t_2^2 \cup \cdots
\cup t_n^2$, $S_1 = s_1^1 \cup s_2^1 \cup \cdots \cup s_{n+1}^1$ and $S_2 =
s_1^1 \cup s_2^2 \cup \cdots \cup s_{n+1}^2$.
Let $N_i^j = N(t_i^j)$ be a regular neighborhood of $t_i^j$ in $B_i$ such
that $N(K_1) = N_1^1 \cup N_2^1 \cup \cdots \cup N_n^1 \cup N_1^2 \cup
N_2^2 \cup \cdots \cup N_n^2$ is a regular neighborhood of $K_1$ in
$S_1^3$, and let $M_i^j = N(s_i^j)$ be a regular neighborhood of $s_i^j$ in
$C_i$ such that $N(K_2) = M_1^1 \cup M_2^1 \cup \cdots \cup M_{n+1}^1 \cup
M_1^2 \cup M_2^2 \cup \cdots \cup M_{n+1}^2$ is a regular neighborhood of
$K_2$ in $S_2^3$.

Divide $t_n^2$ into three arcs $t_{n0}^2 \cup t_{n1}^2 \cup t_{n2}^2$ such
that $t_{n0}^2 \cap t_{n2}^2 = \emptyset$, and divide $N_n^2$ into three
pieces $N_{n0}^2 \cup N_{n1}^2 \cup N_{n2}^2$ according as $t_{n0}^2 \cup
t_{n1}^2 \cup t_{n2}^2$.
Put $N = N_1^1 \cup N_2^1 \cup \cdots \cup N_n^1 \cup N_1^2 \cup N_2^2 \cup
\cdots \cup N_{n-1}^2 \cup N_{n0}^2 \cup N_{n2}^2$, and put $M = M_1^1 \cup
M_2^1 \cup \cdots \cup M_n^1 \cup M_1^2 \cup M_2^2 \cup \cdots \cup
M_{n+1}^2$, ie $N = \cl(N(K_1) - N_{n1}^2)$ and $M = \cl(N(K_2) -
M_{n+1}^1)$. Note that $N \cap N_{n1}^2$ consists of two 2--disks and $M
\cap M_{n+1}^1$ consists of two 2--disks. Then $N$ is a 3--ball in $S_1^3$
and $(N, N \cap K_1)$ is a 1--string trivial tangle, and $M$ is a 3--ball in
$S_2^3$ and $(M, M \cap K_2)$ is a 1--string trivial tangle. We make a
connected sum of $(S_1^3, K_1)$ and $(S_2^3, K_2)$ as follows.
First, by changing the letters if necessary, we may assume that $t_i^1$
connects $t_i^2$ and $t_{i+1}^2$ $(i=1,2, \ldots, n-1)$ and $t_n^1$
connects $t_n^2$ and $t_1^2$, and that $s_i^1$ connects $s_i^2$ and
$s_{i+1}^2$ $(i=1,2, \ldots, n-1)$,  $s_n^1$ connects $s_{n+1}^2$ and
$s_1^2$ and $s_{n+1}^1$ connects $s_n^2$ and $s_{n+1}^2$. Hence we can
identify $N$ and $M$ by the following map $f \co N \to M$.
\begin{align*}
f(N_i^1) & = M_i^1 \qua (i=1,2,\ldots,n)\\
f(N_i^2) & = M_i^2 \qua (i=1,2,\ldots,n-1) \\
f(N_{n0}^2) & = M_n^2 \\
f(N_{n2}^2) & = M_{n+1}^2.
\end{align*}
Put $g = f \vert_{\partial N} \co \partial N \to \partial M$, then by this
glueing map, we get the connected sum $(S^3, K_1 \# K_2) = \cl(S_1^3 - N)
\cup_g \cl(S_2^3 - M)$, where $K_1 \# K_2 = (N_{n1}^2 \cap K_1) \cup
(M_{n+1}^1 \cap K_2)$ as in \fullref{fig:3} $(n=4)$.

\begin{figure}[ht!]
\begin{center}
\labellist
\small
\pinlabel $B_1'$ at 10 190 
\pinlabel $N_1^1$ at 60 170 
\pinlabel $N_2^1$ at 90 170
\pinlabel $N_3^1$ at 120 170
\pinlabel $N_4^1$ at 150 170
\pinlabel $C_1'$ at 240 190
\pinlabel $M_1^1$ at 280 170
\pinlabel $M_2^1$ at 305 170 
\pinlabel $M_3^1$ at 330 170 
\pinlabel $M_4^1$ at 355 170
\pinlabel $M_5^1$ at 380 170 
\pinlabel $N_{41}^2$ at 470 185
\pinlabel $N_{40}^2$ at 150 130
\pinlabel $B_2'$ at 10 40 
\pinlabel $N_1^2$ at 60 15
\pinlabel $N_2^2$ at 90 15
\pinlabel $N_3^2$ at 120 15
\pinlabel $N_{42}^2$ at 150 15
\pinlabel $C_2'$ at 240 40 
\pinlabel $M_1^2$ at 280 15
\pinlabel $M_2^2$ at 305 15
\pinlabel $M_3^2$ at 330 15
\pinlabel $M_4^2$ at 355 15
\pinlabel $M_5^2$ at 380 15
\endlabellist
\includegraphics[width=11cm]{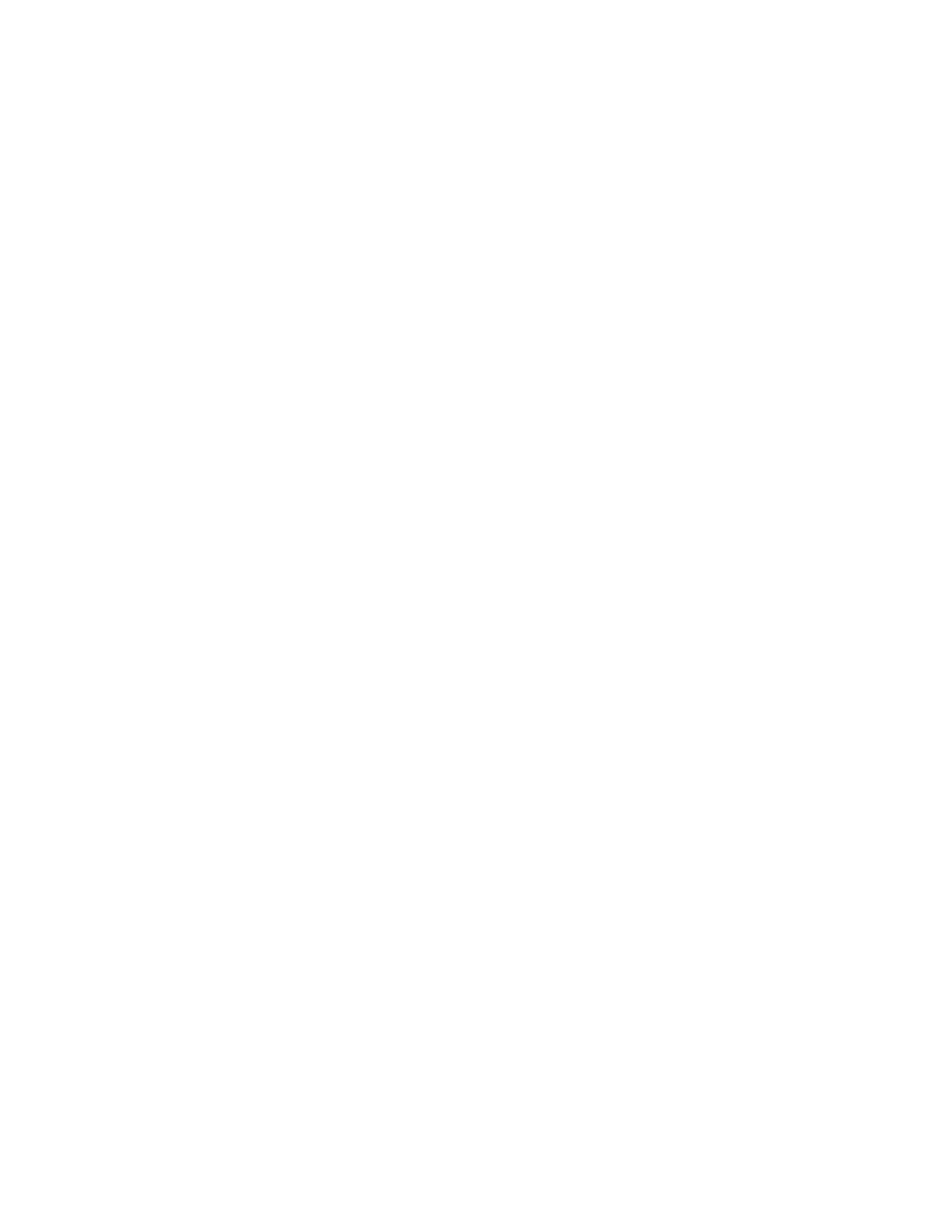}
\caption{}
\label{fig:3}
\end{center}
\end{figure}

Put $B_1' = \cl(B_1 - N)$, $C_1' = \cl(C_1 - M) \cup N_{n1}^2$. Glue
$\partial B_1' \cap \partial N$ and $\partial C_1' \cap \partial M$ with
$g$, and put $W_1 = B_1' \cup_g C_1'$. Then, since $B_1'$ is a genus $n$
handlebody, and since $\{ s_1^1, s_2^1, \ldots, s_n^1 \}$ is a trivial arc
system in $C_1$ and $N_{n1}^2$ is a 1--handle for $C_1$, we see that $W_1$
is a genus $n + (n-1) + 1 = 2n$ handlebody. On the other hand, put $B_2' =
\cl(B_2 - (N \cup N_{n1}^2))$, $C_2' = \cl(C_2 - M)$. Glue $\partial B_2'
\cap \partial N$ and $\partial C_2' \cap \partial M$ with $g$, and put $W_2
= B_2' \cup_g C_2'$. Then, since $B_2'$ is a genus $n$ handlebody, and
since $\{ s_1^2, s_2^2, \ldots, s_{n+1}^2 \}$ is a trivial arc system in
$C_2$, we see that $W_2$ is a genus $n + n = 2n$ handlebody. Hence $(W_1,
W_2)$ is a genus $2n$ Heegaard splitting of $S^3$, and $K_1 \# K_2$ is a
central loop of a handle of $W_1$. This shows that $t(K_1 \# K_2) \le 2n -
1$, and completes the proof of \fullref{prop:3.1}. 
\end{proof}

Suppose there is a knot $K_1$ which has an $n$-string free tangle
decomposition with $t(K_1) = 2n -1$ (cf \fullref{prob:2.6}). Let $K_2$ be a
knot which has an $(n+1)/0$--free tangle decomposition with $t(K_2) = n$ 
(such a knot indeed exists). Then $t(K_1) + t(K_2) = 2n - 1 + n = 3n - 1$,
and by \fullref{prop:3.1}, $t(K_1 \# K_2) \le 2n - 1$. Hence $\displaystyle
{t(K_1 \# K_2) \over {t(K_1) + t(K_2)}} \le {{2n - 1} \over {3n - 1}}$.

In particular, suppose there is a knot $K_1$ which has a 2--string free
tangle decomposition with $t(K_1) = 3$. Then, since there is a knot $K_2$
which has a 3/0--free tangle (3--bridge) decomposition with $t(K_1) = 2$,
we have $t(K_1) = 3$, $t(K_2) = 2$ and $t(K_1 \# K_2) \le 3$ by \fullref{prop:3.1}. 
Moreover, if $t(K_1 \# K_2) = 2$ then $t(K_1) = 1$ or $t(K_2) = 1$ by
[M1, Theorem], a contradiction. Hence $t(K_1 \# K_2) = 3$. This shows that
$\displaystyle {t(K_1 \# K_2) \over {t(K_1) + t(K_2)}} = {3 \over {3 + 2}}
= {3 \over 5} < {2 \over 3}$. Hence, we need to solve the following problem
(a special case of \fullref{prob:2.6}).

\begin{prob} \label{prob:3.2}
Are there (or find) knots $K$ satisfying the
following conditions 
\begin{enumerate}
\item $K$ has a 2--string free tangle decomposition,
\item $t(K) = 3$?
\end{enumerate}
\end{prob}

\begin{remark} \label{rem:2}
If there is a knot $K$ satisfying the conditions in the
above problem, then by \fullref{prop:2.3}, both tangles in the free tangle
decomposition are full free tangles. However, the converse is not true,
because there is a knot $K$ which has a 2--string full free tangle
decomposition but $t(K) = 2$ as follows.

Let $(B_1, T_1)$ be a 2/2--free tangle illustrated in \fullref{fig:1}(iii). Then
$(B_1, T_1)$ is a 2--string full free tangle. Let $(B_2, T_2)$ be a copy of
$(B_1, T_1)$ and put $(S^3, K) = (B_1, T_1) \cup (B_2, T_2)$ with a half
twist. Then, by taking a half twist, $K$ is a knot (not a link) in $S^3$
which has a 2--string full free tangle decomposition. However, by a little
observation, we see that $t(K) = 2$. This shows that the converse is not
true.
\end{remark}

\begin{prop} \label{prop:3.3}
Let $K_1$ be a knot which has an $n$--string
free tangle decomposition with at least one $n/(n-1)$--free tangle for $n >
1$, and $K_2$ a knot which has an $n/0$--free tangle decomposition 
(ie $n$--bridge decomposition). Then $t(K_1 \# K_2) \le 2n - 2$. 
\end{prop}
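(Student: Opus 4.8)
The plan is to adapt the connected-sum construction from the proof of \fullref{prop:3.1}. In that proof the $(n+1)$st bridge of $K_2$ was used both to match the strands across the gluing and to supply the genus; here $K_2$ has only $n$ bridges, and the missing strand is supplied instead by the trivial arc of the $n/(n-1)$-free tangle. The net effect is a Heegaard splitting of genus $2n-1$ rather than $2n$.

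I would fix notation as there: $(S_1^3,K_1)=(B_1,T_1)\cup(B_2,T_2)$ with $(B_1,T_1)$ the $n/(n-1)$-free tangle, $T_1=t_1^1\cup\cdots\cup t_n^1$, and a C-trivialization arc system $T_1'=t_1^1\cup\cdots\cup t_{n-1}^1$; by the remark after \fullref{defn:2.2}, $\cl(B_1-N(T_1'))$ is a genus $n-1$ handlebody in which $t_n^1$ is trivial. Write $(S_2^3,K_2)=(C_1,S_1)\cup(C_2,S_2)$ with both $(C_i,S_i)$ trivial $n$-string tangles. I would then run the construction of \fullref{prop:3.1}: split a suitable arc to create the connecting tube, form the balls $N$ and $M$, and glue them by a matching $f$ to obtain $(S^3,K_1\#K_2)$, with the Heegaard handlebodies $W_1=B_1'\cup_g C_1'$ and $W_2=B_2'\cup_g C_2'$.

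The one substantive change is how the strands are matched across $g$. Since $K_2$ supplies one fewer bridge than in \fullref{prop:3.1}, I would use the trivial arc $t_n^1$ of $K_1$ as the extra strand, absorbing it into the boundary by means of its trivializing disk in $\cl(B_1-N(T_1'))$; this is exactly where the $n/(n-1)$-free hypothesis is indispensable, as a generic $n$-string free tangle need not make $\cl(B_1-N(T_1'))$ a handlebody. With this matching the handle count of \fullref{prop:3.1} carries over but now with one fewer handle surviving, so that $W_1$ and $W_2$ are handlebodies of genus $2n-1$. Hence $(W_1,W_2)$ is a genus $2n-1$ Heegaard splitting of $S^3$ with $K_1\#K_2$ a core of $W_1$, giving $t(K_1\#K_2)\le 2n-2$.

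The step I expect to be the main obstacle is precisely this genus bookkeeping: checking that $W_1$ and $W_2$ really are handlebodies of genus exactly $2n-1$, that is, that using $t_n^1$ and its trivializing disk in place of the missing bridge of $K_2$ saves one handle --- and only one --- without destroying the handlebody structure. Verifying that $t_n^1$ meshes correctly with the $n$-bridge tangle of $K_2$ across the gluing, with no hidden handle reappearing, is the technical heart of the argument.
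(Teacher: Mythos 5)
Your proposal is correct and is essentially the paper's own proof: the whole trivial arc $t_n^1$ is left out of $N$ and the whole bridge $s_n^1$ is left out of $M$ (so that $K_1\#K_2 = t_n^1 \cup s_n^1$), the remaining $2n-1$ pieces are matched by $f$, and the genus bookkeeping you defer comes out exactly as in the paper --- $W_1$ has genus $(n-1)+(n-1)+1=2n-1$ because the $n/(n-1)$ hypothesis makes $B_1'=\cl(B_1-N(T_1'))$ a genus $n-1$ handlebody with $t_n^1$ trivial in it, while $W_2$ has genus $n+(n-1)=2n-1$ using the freeness of $(B_2,T_2)$ and the triviality of $S_2$ in $C_2$. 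The only inaccuracy is your phrase ``split a suitable arc'': unlike Proposition 3.1, where $K_2$ has two more strands than $K_1$ and one strand of $K_1$ must be subdivided into three pieces to make the balls match, here both knots contribute $2n$ strands, so no subdivision is needed --- removing one whole strand from each side already gives balls $N$ and $M$ that match piece by piece.
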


\begin{proof} Suppose $(S_1^3, K_1) = (B_1, T_1) \cup(B_2, T_2)$ is an
$n$--string free tangle decomposition with an $n/(n-1)$--free tangle, say
$(B_1, T_1)$, and $(S_2^3, K_2) = (C_1, S_1) \cup (C_2, S_2)$ is an
$n/0$--free tangle decomposition, where $T_1 = t_1^1 \cup t_2^1 \cup \cdots
\cup t_n^1$, $T_2 = t_1^2 \cup t_2^2 \cup \cdots \cup t_n^2$, $S_1 = s_1^1
\cup s_2^1 \cup \cdots \cup s_n^1$ and $S_2 = s_1^1 \cup s_2^2 \cup \cdots
\cup s_n^2$.
Let $N_i^j = N(t_i^j)$ be a regular neighborhood of $t_i^j$ in $B_i$ such
that $N(K_1) = N_1^1 \cup N_2^1 \cup \cdots \cup N_n^1 \cup N_1^2 \cup
N_2^2 \cup \cdots \cup N_n^2$ is a regular neighborhood of $K_1$ in
$S_1^3$, and let $M_i^j = N(s_i^j)$ be a regular neighborhood of $s_i^j$ in
$C_i$ such that $N(K_2) = M_1^1 \cup M_2^1 \cup \cdots \cup M_n^1 \cup
M_1^2 \cup M_2^2 \cup \cdots \cup M_n^2$ is a regular neighborhood of $K_2$
in $S_2^3$.

By changing the letters if necessary, we may assume that $t_i^1$ connects
$t_i^2$ and $t_{i+1}^2$ $(i=1,2, \ldots, n-1)$ and $t_n^1$ connects $t_n^2$
and $t_1^2$, and that $s_i^1$ connects $s_i^2$ and $s_{i+1}^2$ $(i=1,2,
\ldots, n-1)$ and $s_n^1$ connects $s_n^2$ and $s_1^2$. Moreover, since
$(B_1, T_1)$ is a $n/(n-1)$--free tangle, we may assume that $t_1^1 \cup
t_2^1 \cup \cdots \cup t_{n-1}^1$ is a C--trivialization arc system in
$B_1$, ie $\cl(B_1 - N(t_1^1 \cup t_2^1 \cup \cdots \cup t_{n-1}^1))$ is
a handlebody and $t_n^1$ is a trivial arc in the handlebody.

Put $N = N_1^1 \cup N_2^1 \cup \cdots \cup N_{n-1}^1 \cup N_1^2 \cup N_2^2
\cup \cdots \cup N_n^2$, and put $M = M_1^1 \cup M_2^1 \cup \cdots \cup
M_{n-1}^1 \cup M_1^2 \cup M_2^2 \cup \cdots \cup M_n^2$, ie $N =
\cl(N(K_1) - N_n^1)$ and $M = \cl(N(K_2) - M_n^1)$. Then $N$ is a 3--ball in
$S_1^3$ and $(N, N \cap K_1)$ is a 1--string trivial tangle, and $M$ is a
3--ball in $S_2^3$ and $(M, M \cap K_2)$ is a 1--string trivial tangle. Hence
we can identify $N$ and $M$ by the following map $f \co N \to M$.
\begin{align*}
f(N_i^1) & = M_i^1 \qua (i=1,2,\ldots,n-1) \\
f(N_i^2) & = M_i^2 \qua (i=1,2,\ldots,n).
\end{align*}

Put $g = f \vert_{\partial N} \co \partial N \to \partial M$, then by this
glueing map, we get the connected sum $(S^3, K_1 \# K_2) = \cl(S_1^3 - N)
\cup_g \cl(S_2^3 - M)$, where $K_1 \# K_2 = (N_n^1 \cap K_1) \cup (M_n^1
\cap K_2)$ as in \fullref{fig:4} $(n=4)$.

\begin{figure}[ht!]
\begin{center}
\labellist
\small
\pinlabel $B_1'$ at 10 195
\pinlabel $N_1^1$ at 65 170 
\pinlabel $N_2^1$ at 95 170 
\pinlabel $N_3^1$ at 125 170
\pinlabel $N_4^1$ at 155 170
\pinlabel $C_1'$ at 230 195 
\pinlabel $M_1^1$ at 290 170
\pinlabel $M_2^1$ at 320 170
\pinlabel $M_3^1$ at 350 170
\pinlabel $M_4^1$ at 380 170
\pinlabel $B_2'$ at 10 45
\pinlabel $N_1^2$ at 65 15
\pinlabel $N_2^2$ at 95 15
\pinlabel $N_3^2$ at 125 15
\pinlabel $N_4^2$ at 155 15
\pinlabel $C_2'$ at 230 45
\pinlabel $M_1^2$ at 290 15
\pinlabel $M_2^2$ at 320 15
\pinlabel $M_3^2$ at 350 15
\pinlabel $M_4^2$ at 380 15
\endlabellist
\includegraphics[width=10cm]{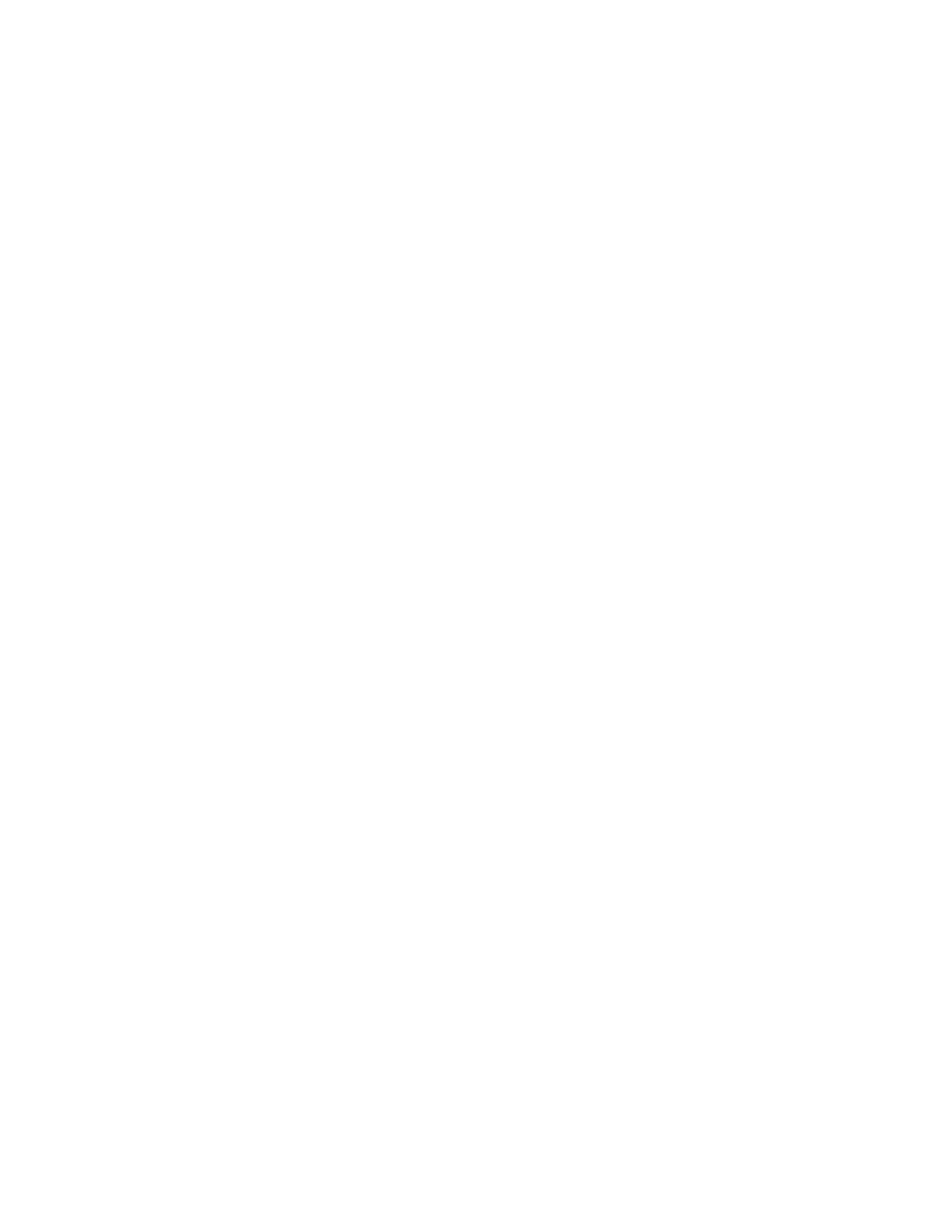}
\caption{}
\label{fig:4}
\end{center}
\end{figure}

Put $B_1' = \cl(B_1 - N)$, $C_1' = \cl(C_1 - M)$. Glue $\partial B_1' \cap
\partial N$ and $\partial C_1' \cap \partial M$ with $g$, and put $W_1 =
B_1' \cup_g C_1'$. Then, since $B_1'$ is a genus $n-1$ handlebody and
$t_n^1$ is a trivial arc in the handlebody, and since $\{ s_1^1, s_2^1,
\ldots, s_n^1 \}$ is a trivial arc system in $C_1$ and $N_n^1 \cap M_n^1$
consists of two 2--disks, we see that $W_1$ is a genus $(n-1) + (n-1) + 1 =
2n-1$ handlebody. On the other hand, put $B_2' = \cl(B_2 - N)$, $C_2' =
\cl(C_2 - M)$. Glue $\partial B_2' \cap \partial N$ and $\partial C_2' \cap
\partial M$ with $g$, and put $W_2 = B_2' \cup_g C_2'$. Then, since $B_2'$
is a genus $n$ handlebody, and since $\{ s_1^2, s_2^2, \ldots, s_n^2 \}$ is
a trivial arc system in $C_2$, we see that $W_2$ is a genus $n + (n-1) =
2n-1$ handlebody. Hence $(W_1, W_2)$ is a genus $2n-1$ Heegaard splitting
of $S^3$, and $K_1 \# K_2$ is a central loop of a handle of $W_1$. This
shows that $t(K_1 \# K_2) \le 2n - 2$, and completes the proof of
\fullref{prop:3.3}. 
\end{proof}

Suppose there is a knot $K_1$ which has an $n$--string free tangle
decomposition with at least one $n/(n-1)$--free tangle and $t(K_1) = 2n-2$
(cf \fullref{prob:2.5}), and let $K_2$ be a knot which has an $n/0$--free tangle
decomposition with $t(K_2) = n-1$ (such a knot indeed exists). Then
$t(K_1) + t(K_2) = (2n-2) + (n-1) = 3n - 3$, and by \fullref{prop:3.3}, $t(K_1
\# K_2) \le 2n - 2$. Hence $\displaystyle {t(K_1 \# K_2) \over {t(K_1) +
t(K_2)}} \le {{2n - 2} \over {3n - 3}} = {{2(n - 1)} \over {3(n - 1)}} ={2
\over 3}$.

In particular, in the case when $n = 2$, there indeed exists a knot $K_1$
which has a 2--string free tangle decomposition with at least one 2/1--free
tangle and $t(K) = 2$ (cf \fullref{fig:1}(ii)), and let $K_2$ be a 2--bridge knot.
Then $t(K_1) = 2$, $t(K_2) = 1$ and $t(K_1 \# K_2) = 2$ by \fullref{prop:3.3}.
Hence $\displaystyle {t(K_1 \# K_2) \over {t(K_1) + t(K_2)}} = {2 \over {2
+ 1}} = {2 \over 3}$. This is the first example whose tunnel numbers go
down under connected sum introduced in Morimoto \cite{3,4}.

In general, for any $n > 1$ and $k$ with $0 \le k \le n$, we have 
the following Theorem.

\begin{thm}\label{thm:3.4} Let $K_1$ be a knot which has an $n$--string
free tangle decomposition with at least one $n/k$--free tangle, and $K_2$ a
knot which has a $(k+1)/0$--free tangle decomposition (ie,
$(k+1)$--bridge decomposition). Then $t(K_1 \# K_2) \le n + k -1$. 
\end{thm}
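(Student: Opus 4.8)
The plan is to imitate \fullref{prop:3.1} and \fullref{prop:3.3} and build an explicit genus $n+k$ Heegaard splitting $(W_1,W_2)$ of $S^3$ with $K_1\#K_2$ a core of a handle of $W_1$, whence $t(K_1\#K_2)\le(n+k)-1=n+k-1$. Morally this is the splitting of \fullref{prop:2.3} for $K_1$ --- with its $n-1$ tubes joining the strands of the tangle and its $k$ tubes coming from the C--trivialization, of total genus $1+(n-1)+k=n+k$ --- into which the $(k+1)$--bridge knot $K_2$ is absorbed across the connected sum. Write $(S_1^3,K_1)=(B_1,T_1)\cup(B_2,T_2)$ with $(B_1,T_1)$ the $n/k$--free tangle and $T_1=t_1^1\cup\cdots\cup t_n^1$, and take $t_1^1\cup\cdots\cup t_k^1$ to be the C--trivialization arc system, so that $\cl(B_1-N(t_1^1\cup\cdots\cup t_k^1))$ is a genus $k$ handlebody carrying $t_{k+1}^1,\dots,t_n^1$ as a trivial arc system; write $(S_2^3,K_2)=(C_1,S_1)\cup(C_2,S_2)$ for the $(k+1)$--bridge decomposition, with trivial tangles $S_1=s_1^1\cup\cdots\cup s_{k+1}^1$ and $S_2=s_1^2\cup\cdots\cup s_{k+1}^2$.

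As before, I would form the connected sum by deleting a $1$--string trivial tangle $N=\cl(N(K_1)-N_n^1)$ from $(S_1^3,K_1)$ (the tube about the trivial arc $t_n^1$) and a $1$--string trivial tangle $M=\cl(N(K_2)-M_{k+1}^1)$ from $(S_2^3,K_2)$, and gluing $\partial N$ to $\partial M$ by a homeomorphism $g$ taking knot to knot, so that the surviving arcs $t_n^1$ and $s_{k+1}^1$ join to give $K_1\#K_2$. Putting $B_1'=\cl(B_1-N)$, $C_1'=\cl(C_1-M)$ and $B_2'=\cl(B_2-N)$, $C_2'=\cl(C_2-M)$, I set $W_1=B_1'\cup_g C_1'$ and $W_2=B_2'\cup_g C_2'$.

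The genus bookkeeping is the substance. Here $B_1'=\cl(B_1-N(t_1^1\cup\cdots\cup t_{n-1}^1))$ is a genus $n-1$ handlebody --- the genus $k$ C--trivialization handlebody with the remaining trivial arcs $t_{k+1}^1,\dots,t_{n-1}^1$ also drilled out --- carrying $t_n^1$ as a trivial arc, $C_1'$ is a genus $k$ handlebody carrying $s_{k+1}^1$ as a trivial arc, and $B_2'$, $C_2'$ are handlebodies of genus $n$ and $k+1$. The decisive point, exactly the ``$N_n^1\cap M_n^1$ consists of two $2$--disks'' step of \fullref{prop:3.3}, is that $g$ identifies the two meridian caps of $N$ with those of $M$; hence $B_1'$ is glued to $C_1'$ along $n-1$ annuli together with these two disks, a surface of Euler characteristic $2$, and the extra handle so created has core $t_n^1\cup s_{k+1}^1=K_1\#K_2$. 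This gives $W_1$ genus $(n-1)+k+1=n+k$, while the complementary gluing of $B_2'$ to $C_2'$ is along annuli only (Euler characteristic $0$) and gives $W_2$ genus $n+k$ as well, so that $(W_1,W_2)$ is the required splitting.

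The step I expect to be the real obstacle is the mismatch of the string numbers $n$ and $k+1$, which is absent only in the cases $k=n-1$ and $k=n$ treated in \fullref{prop:3.3} and \fullref{prop:3.1}. For the regroupings $W_1=B_1'\cup_g C_1'$ and $W_2=B_2'\cup_g C_2'$ to be genuine gluings, $g$ must match the tube annuli of $\partial N$ with those of $\partial M$, yet the $B_2'$--side carries $n$ annuli against $k+1$ on the $C_2'$--side, and the $B_1'$--side $n-1$ against $k$ on the $C_1'$--side. I would repair this by the arc-subdivision device of \fullref{prop:3.1}, applied now to the fewer-stranded knot $K_2$: subdivide its bridges so as to create the $n-k-1$ extra tube segments matching the surplus on the $B_1',B_2'$ side, and transfer the middle tubes across the gluing sphere as $1$--handles. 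These subdivisions equalize the annulus counts and redistribute genus among the four pieces, but should leave the totals $W_1$ and $W_2$ at genus $n+k$ and $K_1\#K_2$ the core of a single handle. Checking that the assembled $W_1$ and $W_2$ are honestly handlebodies of genus $n+k$ --- equivalently, that the $k+1$ bridges of $K_2$ are absorbed exactly by the $k$ C--trivialization handles together with the one handle created by the connected sum --- is the part demanding the most care, and is where the hypothesis that $K_2$ is $(k+1)$--bridge is used in full.
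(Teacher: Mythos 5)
Your reduction to \fullref{prop:3.1} and \fullref{prop:3.3} disposes of $k=n$ and $k=n-1$, and your genus count in the third paragraph is the right arithmetic; but that count silently assumes the annulus patterns on $\partial N$ and $\partial M$ already agree, which happens only when $k=n-1$. So the whole weight falls on your last paragraph, the case $k<n-1$ --- precisely the case the theorem adds --- and there the proposed repair cannot work. The obstruction is not merely that the annulus counts differ; it is that they cannot be equalized by subdivision. With your choice of surviving arcs ($t_n^1$ and $s_{k+1}^1$), the glued arc of $K_1$ meets the two decomposition spheres in the strictly alternating pattern $B_2,B_1,B_2,\dots,B_1,B_2$ ($2n-1$ pieces), while the glued arc of $K_2$ has pattern $C_2,C_1,\dots,C_1,C_2$ ($2k+1$ pieces). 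Subdividing a bridge of $K_2$ replaces one letter of this pattern by a block of consecutive \emph{equal} letters, since all sub-arcs of a bridge stay on the same side of the bridge sphere; no refinement of this kind can produce a strictly alternating pattern unless the patterns already agree, i.e.\ unless $k=n-1$. Moreover, ``transfer the middle tubes across the gluing sphere'' misreads the device of \fullref{prop:3.1}: the tube $N_{n1}^2$ transferred there belongs to the \emph{surviving} knot (it is the $K_1$--half of $K_1\#K_2$), whereas the middle tubes you create lie in the glued region $M$, which is deleted from $S_2^3$ altogether and so cannot serve as $1$--handles; excluding them from $M$ instead would disconnect $M$ and destroy the connected--sum structure.

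If one glues anyway with mismatched patterns (matching sub-annuli in order along the two lateral cylinders), then $n-1-k$ of the lateral annuli of $B_1$--tubes of $N$ get identified with sub-annuli lying in the interior of the lateral annulus of a single tube of $K_2$ in $C_2$. In the regrouping $W_2=B_2'\cup_g C_2'$, the piece $C_2'$ is then attached to $B_2'$ along several disjoint sub-annuli of one tube --- a round--$1$--handle attachment, not a collar --- and this need not yield a handlebody: van Kampen gives HNN extensions identifying the meridians $\mu_{k+1},\dots,\mu_n$ of distinct strands of the free tangle $(B_2,T_2)$ with one another, and if, say, $t_{k+2}^2$ is a parallel push-off of $t_{k+1}^2$ (nothing in the hypotheses forbids this), these meridians are conjugate in the free group $\pi_1(\cl(B_2-N(T_2)))$, so $\pi_1(W_2)$ contains a rank--two free abelian subgroup and $W_2$ is not a handlebody. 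The paper's proof evades all of this by changing which arc of $K_1$ survives, not by subdividing $K_2$: it takes the glued arc of $K_1$ to be $t_1^2\cup t_1^1\cup\cdots\cup t_k^1\cup t_{k+1}^2$ ($k$ strands in $B_1$, $k+1$ in $B_2$), which matches the bridge pattern of $K_2$ exactly with no subdivision; the surviving arc of $K_1$ is then the long arc $t_{k+1}^1\cup t_{k+2}^2\cup\cdots\cup t_n^2\cup t_n^1$, and it is its tubes $N_{k+2}^2,\dots,N_n^2$ lying in $B_2$ --- tubes of the surviving knot, so legitimately present after the sum --- that are transferred to $B_1'$ as $1$--handles. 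With that choice $W_2=\cl(B_2-N(T_2))\cup_g\cl(C_2-N(S_2))$ is glued along $k+1$ \emph{full} lateral annuli of trivial arcs and is a handlebody automatically, and both sides come out genus $n+k$ exactly as in your count.
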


\begin{proof} If $k = n$ or $n-1$, then this is the same as \fullref{prop:3.1}
or \fullref{prop:3.3} respectively. Hence we assume $k < n-1$.

Suppose $(S_1^3, K_1) = (B_1, T_1) \cup(B_2, T_2)$ is an $n$--string free
tangle decomposition with an $n/k$--free tangle, say $(B_1, T_1)$, and
$(S_2^3, K_2) = (C_1, S_1) \cup (C_2, S_2)$ is an $(k+1)/0$--free tangle
decomposition, where $T_1 = t_1^1 \cup t_2^1 \cup \cdots \cup t_n^1$, $T_2
= t_1^2 \cup t_2^2 \cup \cdots \cup t_n^2$, $S_1 = s_1^1 \cup s_2^1 \cup
\cdots \cup s_{k+1}^1$ and $S_2 = s_1^1 \cup s_2^2 \cup \cdots \cup
s_{k+1}^2$.
Let $N_i^j = N(t_i^j)$ be a regular neighborhood of $t_i^j$ in $B_i$ such
that $N(K_1) = N_1^1 \cup N_2^1 \cup \cdots \cup N_n^1 \cup N_1^2 \cup
N_2^2 \cup \cdots \cup N_n^2$ is a regular neighborhood of $K_1$ in
$S_1^3$, and let $M_i^j = N(s_i^j)$ be a regular neighborhood of $s_i^j$ in
$C_i$ such that $N(K_2) = M_1^1 \cup M_2^1 \cup \cdots \cup M_{k+1}^1 \cup
M_1^2 \cup M_2^2 \cup \cdots \cup M_{k+1}^2$ is a regular neighborhood of
$K_2$ in $S_2^3$.

By changing the letters if necessary, we may assume that $t_i^1$ connects
$t_i^2$ and $t_{i+1}^2$ $(i=1,2, \ldots, n-1)$ and $t_n^1$ connects $t_n^2$
and $t_1^2$, and that $s_i^1$ connects $s_i^2$ and $s_{i+1}^2$ $(i=1,2,
\ldots, k)$ and $s_{k+1}^1$ connects $s_{k+1}^2$ and $s_1^2$. Moreover,
since $(B_1, T_1)$ is a $n/k$--free tangle, we may assume that $t_1^1 \cup
t_2^1 \cup \cdots \cup t_k^1$ is a C--trivialization arc system in $B_1$,
ie, $\cl(B_1 - N(t_1^1 \cup t_2^1 \cup \cdots \cup t_k^1))$ is a
handlebody and $t_{k+1}^1 \cup \cdots \cup t_n^1$ is a trivial arc system
in the handlebody.

Put $N = N_1^1 \cup N_2^1 \cup \cdots \cup N_k^1 \cup N_1^2 \cup N_2^2 \cup
\cdots \cup N_{k+1}^2$, and put $M = M_1^1 \cup M_2^1 \cup \cdots \cup
M_k^1 \cup M_1^2 \cup M_2^2 \cup \cdots \cup M_{k+1}^2$, ie $N =
\cl(N(K_1) - (N_{k+1}^1 \cup \cdots \cup N_n^1 \cup N_{k+2}^2 \cup \cdots
\cup N_n^2))$ and $M = \cl(N(K_2) - M_{k+1}^1)$. Then $N$ is a 3--ball in
$S_1^3$ and $(N, N \cap K_1)$ is a 1--string trivial tangle, and $M$ is a
3-ball in $S_2^3$ and $(M, M \cap K_2)$ is a 1--string trivial tangle. Hence
we can identify $N$ and $M$ by the following map $f \co N \to M$.
\begin{align*}
f(N_i^1) &= M_i^1 \qua (i=1,2, \ldots, k) \\
f(N_i^2) &= M_i^2 \qua (i=1,2, \ldots, k+1).
\end{align*}

Put $g = f \vert_{\partial N} \co \partial N \to \partial M$, then by this
glueing map, we get the connected sum $(S^3, K_1 \# K_2) = \cl(S_1^3 - N)
\cup_g \cl(S_2^3 - M)$, where $K_1 \# K_2 = (((N_{k+1}^1 \cup \cdots \cup
N_n^1) \cup (N_{k+2} \cup \cdots \cup N_n^2)) \cap K_1) \cup (M_{k+1}^1
\cap K_2)$ as in \fullref{fig:5} $(n=6, k=3)$.

\begin{figure}[ht!]
\begin{center}
\labellist
\small
\pinlabel $B_1'$ at 10 190 
\pinlabel $N_1^1$ at 55 170 
\pinlabel $N_2^1$ at 80 170 
\pinlabel $N_3^1$ at 105 170
\pinlabel $N_4^1$ at 130 290
\pinlabel $N_5^1$ at 205 180
\pinlabel $N_6^1$ at 180 170
\pinlabel $B_2'$ at 10 45
\pinlabel $N_1^2$ at 55 15
\pinlabel $N_2^2$ at 80 15
\pinlabel $N_3^2$ at 105 15
\pinlabel $N_4^2$ at 130 15
\pinlabel $N_5^2$ at 155 15
\pinlabel $N_6^2$ at 180 15
\pinlabel $C_1'$ at 265 190
\pinlabel $M_1^1$ at 315 170
\pinlabel $M_2^1$ at 345 170
\pinlabel $M_3^1$ at 375 170
\pinlabel $M_4^1$ at 405 170
\pinlabel $C_2'$ at 265 45
\pinlabel $M_1^2$ at 315 15
\pinlabel $M_2^2$ at 345 15
\pinlabel $M_3^2$ at 375 15
\pinlabel $M_4^2$ at 405 15
\endlabellist
\includegraphics[width=11.5cm]{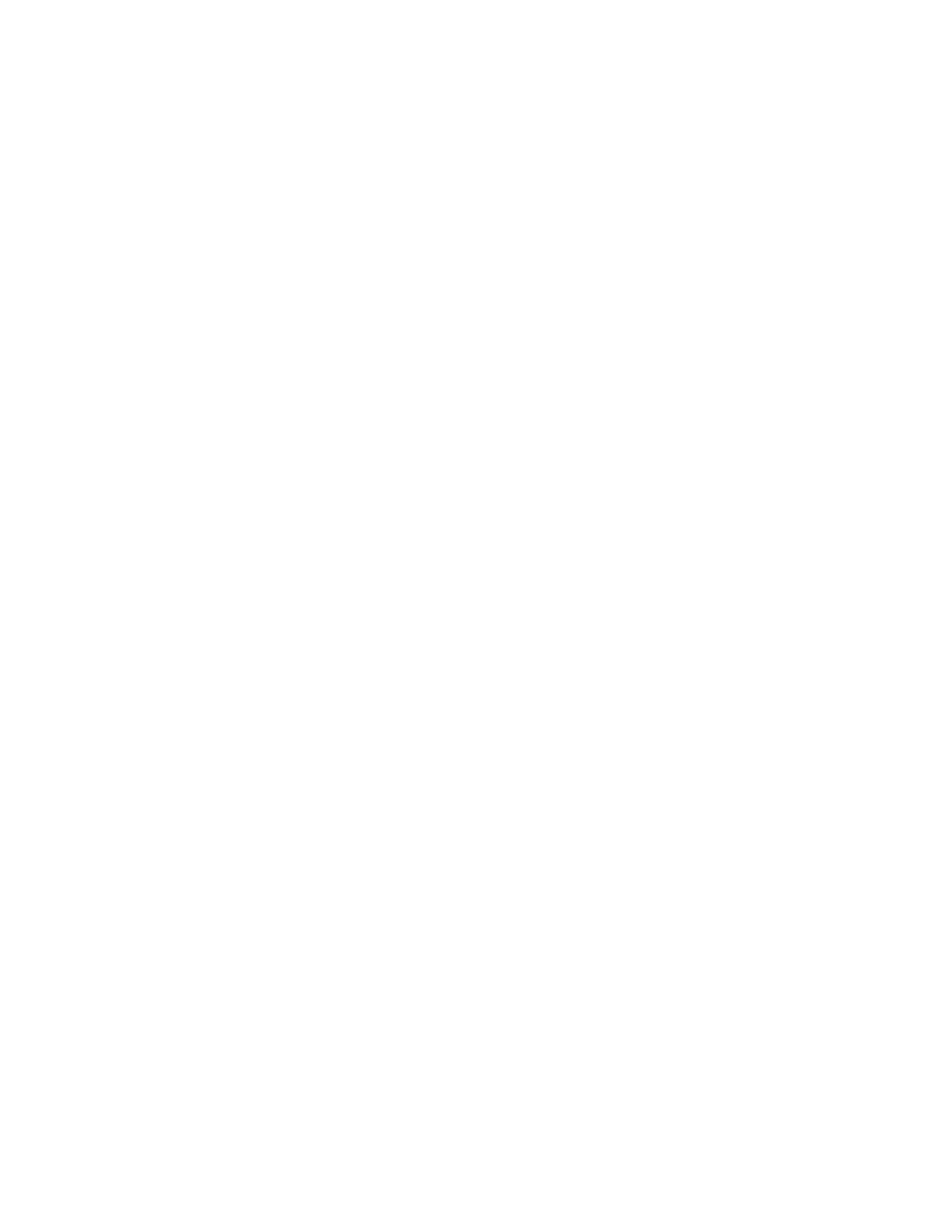}
\caption{}
\label{fig:5}
\end{center}
\end{figure}

Put $B_1' = \cl(B_1 - N) \cup (N_{k+2}^2 \cup \cdots \cup N_n^2)$, $C_1' =
\cl(C_1 - M)$. Glue $\partial B_1' \cap \partial N$ and $\partial C_1' \cap
\partial M$ with $g$, and put $W_1 = B_1' \cup_g C_1'$. Then, since $B_1'$
is a genus $n-1$ handlebody and $t_{k+1}^1 \cup \cdots \cup t_n^1 \cup
t_{k+2}^2 \cup \cdots \cup t_n^2$ is a trivial arc in the handlebody, and
since $\{ s_1^1, s_2^1, \ldots, s_{k+1}^1 \}$ is a trivial arc system in
$C_1$ and $((N_{k+1}^1 \cup \cdots \cup N_n^1) \cup (N_{k+2} \cup \cdots
\cup N_n^2)) \cap M_{k+1}^1$ consists of two 2--disks, we see that $W_1$ is
a genus $(n-1) + (k-1) + 2 = n+k$ handlebody.
On the other hand, put $B_2' = \cl(B_2 - N - N_n^2)$, $C_2' = \cl(C_2 - M)$.
Glue $\partial B_2' \cap \partial N$ and $\partial C_2' \cap \partial M$
with $g$, and put $W_2 = B_2' \cup_g C_2'$. Then, since $B_2'$ is a genus
$n$ handlebody, and since $\{ s_1^2, s_2^2, \ldots, s_{k+1}^2 \}$ is a
trivial arc system in $C_2$, we see that $W_2$ is a genus $n+k$ handlebody.
Hence $(W_1, W_2)$ is a genus $n+k$ Heegaard splitting of $S^3$, and $K_1
\# K_2$ is a central loop of a handle of $W_1$. This shows that $t(K_1 \#
K_2) \le n + k - 1$, and completes the proof of \fullref{thm:3.4}. 
\end{proof}

Suppose there is a knot $K_1$ which has an $n$--string free tangle
decomposition with at least one $n/k$--free tangle and $t(K_1) = n + k - 1$
(cf \fullref{prob:2.5}), and let $K_2$ be a knot which has a $(k+1)/0$--free
tangle decomposition with $t(K_2) = k$  (such a knot indeed exists). Then
$t(K_1) + t(K_2) = n + 2k - 1$, and by \fullref{thm:3.4}, $t(K_1 \# K_2) \le n +
k - 1$. Hence $\displaystyle {t(K_1 \# K_2) \over {t(K_1) + t(K_2)}} \le
{{n + k - 1} \over {n + 2k - 1}}$.

Put $\ell = n - k$, then $0 \le \ell \le n$, $k = n - \ell$, $n + k - 1 =
2n - \ell - 1$ and $n + 2k - 1 = 3n - 2\ell - 1$. Hence $\displaystyle
{t(K_1 \# K_2) \over {t(K_1) + t(K_2)}} \le {{2n - \ell - 1} \over {3n -
2\ell - 1}}$.

If $\ell = 0 \ (k = n)$, then $\displaystyle {{2n - \ell - 1} \over {3n -
2\ell - 1}} = {{2n - 1} \over {3n -  1}} \to {2 \over 3} (-0)$ as $(n \to
\infty)$.

If $\ell = 1 \ (k = n-1)$, then $\displaystyle {{2n - \ell - 1} \over {3n -
2\ell - 1}} = {{2(n-1)} \over {3(n-1)}} = {2 \over 3}$.

If $\ell > 1 \ (k < n-1)$, then $\displaystyle {{2n - \ell - 1} \over {3n -
2\ell - 1}} \to {2 \over 3} (+0)$ as $(n \to \infty)$.

Therefore, we see that the least degeneration ratio can be gotten by the
method in this paper is $\displaystyle {3 \over 5}$ in the case when $n =
2$ and $\ell = 0 \ (k=2)$.

\bibliographystyle{gtart}
\bibliography{link}

\end{document}